\pgfplotsset{compat=1.5}
\newlist{enumalpha}{enumerate}{1}
\setlist[enumalpha, 1]{
label=\alph*)
}
\numberwithin{algorithm}{section}
\newtheorem{theorem}{Theorem}
\newtheorem{lemma}[theorem]{Lemma}
\newtheorem{remark}[theorem]{Remark}
\newtheorem{corollary}[theorem]{Corollary}
\numberwithin{theorem}{section}
\numberwithin{equation}{section}
\author{Fabian Gundlach}
\title{Sampling cubic rings}
\newcommand{\V}{\mathcal V}
\newcommand{\W}{\mathcal W}
\DeclareMathOperator{\Reg}{Reg}
\subjclass{11Y40, 11R16}
\begin{document}

\begin{abstract}
We explain how to construct a uniformly random cubic integral domain $S$ of given signature with $|\disc(S)|\leq T$ in expected time $\widetilde\O(\log T)$.
\end{abstract}

\maketitle

\section{Introduction}

In the past decades, there has been an increasing amount of interest in the statistical properties of arithmetic objects such as number fields or orders in number fields.

In \cite{belabas-computing-cubic-fields}, Belabas gave an algorithm that computes a list of all cubic number fields $K$ with $|\disc(K)|\leq T$ in time $\widetilde\O(T)$. One can similarly enumerate all cubic integral domains $S$ (i.e., orders in cubic number fields) with $|\disc(S)|\leq T$ in time $\widetilde\O(T)$. The running time is essentially optimal since the number of such fields (or rings) is $\asymp T$ as was shown by Davenport and Heilbronn in \cite{davenport-heilbronn-cubic-fields}.

In this paper, we give an algorithm that constructs a uniformly random cubic integral domain $S$ of given signature with $|\disc(S)|\leq T$ in expected time $\widetilde\O(\log T)$. (See \Cref{cubicsummary} for the precise statement.) The running time is essentially optimal since the smallest possible total number of bits in the coefficients of a cubic form corresponding to such a random ring $S$ is on average $\asymp\log T$.

Like \cite{belabas-computing-cubic-fields} and \cite{davenport-heilbronn-cubic-fields}, we use Levi's parametrization of cubic rings by $\GL_2(\Z)$-orbits of integral binary cubic forms. (See \cite{levi-cubic}.)

The most straightforward idea (used in \cite{belabas-computing-cubic-fields} and \cite{davenport-heilbronn-cubic-fields}) for enumerating or counting the orbits with bounded discriminant is to construct a fundamental domain for the action of $\GL_2(\Z)$ on the space of binary cubic forms and to then enumerate or count the lattice points in this domain.

The unboundedness of the fundamental domain, i.e., the presence of (long and narrow) cusps presents an inconvenience when enumerating or counting the lattice points.
To deal with this issue, Bhargava introduced an elegant approach, which he calls ``averaging over fundamental domains'' or ``thickening and cutting off the cusps''. Very roughly speaking, he showed that it suffices to be able to estimate the number of points in linear transforms of a fixed region $U$ of our choice! But it is relatively easy to count lattice points in a linear transform of a fixed (large) ball, at least as long as the linear transformation does not deform the ball too much. His method was for example used to obtain more precise counting results in \cite{bhargava-shankar-tsimerman-cubic-counting}.

In this paper, we adapt his method to the problem of selecting orbits uniformly at random. Very roughly speaking, it suffices to be able to pick uniformly random lattice points from linear transforms of a fixed region $U$ of our choice and to have a good ``uniform'' upper bound on the number of lattice points in this transform. We use rejection sampling to ensure that all orbits occur with exactly the same probability. The quality of the uniform upper bound on the number of lattice points is important for estimating the running time of the algorithm.
To the author, it seems not quite straightforward to choose an element of the fundamental domain uniformly at random without the use of Bhargava's method.

An implementation of the algorithm as a standalone program is provided. (See \cref{implementation}.)

Our approach can be adapted to other arithmetic objects parameterized by prehomogeneous vector spaces such as the famous parameterization of ideal classes of quadratic rings by orbits of binary quadratic forms or the parameterizations given in \cite{bhargava-quartic-parametrization}, \cite{bhargava-quintic-parametrization}. (See \cref{outlook}.)

\paragraph{Acknowledgements}
This work was supported by the Deutsche Forschungsgemeinschaft (DFG, German Research Foundation) --- Project-ID 491392403 --- TRR 358, project A4.
The author is grateful for helpful conversations with Noam Elkies, Jürgen Klüners, and Anne-Edgar Wilke.

\section{Preparations}

It is easy to select an integer lattice point in an axis-parallel box with integer side lengths uniformly at random. This remains true if we apply a triangular linear transformation to the box:

\begin{lemma}\label{triangular_counting}
Consider an axis-parallel box $I=I_1\times\cdots\times I_n\subset\R^n$, where $I_1,\dots,I_n$ are half-open intervals, say $I_i=[a_i,a_i+l_i)$, of integer lengths $l_1,\dots,l_n\in\Z$. Let $M=(m_{ij})_{i,j}$ be a lower-triangular unipotent matrix with inverse $M^{-1}=(m_{ij}')_{i,j}$. Then, the set $\Z^n\cap MI$ has size $l_1\cdots l_n$ and the following algorithm selects an element $v=(v_1,\dots,v_n)$ of $\Z^n\cap MI$ uniformly at random.
\end{lemma}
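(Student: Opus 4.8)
The plan is to exploit the fact that a lower-triangular unipotent change of coordinates turns the membership condition $v\in MI$ into a cascade of one-dimensional constraints, the $i$-th of which confines $v_i$ to a half-open interval of length exactly $l_i$ whose endpoints depend only on $v_1,\dots,v_{i-1}$. Correspondingly, the algorithm referred to in the statement should process the coordinates in the order $i=1,\dots,n$, and at step $i$ choose $v_i$ uniformly at random among the integers in that interval.

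First I would record that $M^{-1}$ is again lower-triangular unipotent, so $m_{ij}'=0$ for $j>i$ and $m_{ii}'=1$. Hence for $v=(v_1,\dots,v_n)\in\R^n$,
\[
  (M^{-1}v)_i = v_i + \sum_{j<i} m_{ij}' v_j ,
\]
and $v\in MI$ is equivalent to $M^{-1}v\in I$, i.e.\ to the conjunction over $i=1,\dots,n$ of
\[
  v_i \in J_i(v_1,\dots,v_{i-1}) := \Bigl[\, a_i - \textstyle\sum_{j<i} m_{ij}' v_j,\; a_i + l_i - \sum_{j<i} m_{ij}' v_j \,\Bigr).
\]
Each $J_i$ is a half-open real interval of length $l_i$, and the one elementary observation that makes everything work is that such an interval contains exactly $l_i$ integers for any position of its endpoints (since $l_i\in\Z$); in particular this count is independent of $v_1,\dots,v_{i-1}$.

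For the cardinality claim I would induct on $k$: let $A_k\subseteq\Z^k$ be the set of integer tuples $(v_1,\dots,v_k)$ satisfying the first $k$ of the conditions above. The projection $A_k\to A_{k-1}$ forgetting $v_k$ has every fibre of size exactly $l_k$ by the observation, so $|A_k| = l_k\cdot|A_{k-1}|$; as $A_0$ is a point this yields $|A_n| = l_1\cdots l_n$, and $A_n = \Z^n\cap MI$ by the rewriting. (If some $l_i=0$ both sides are $0$ and there is nothing to sample, so one may assume $l_i\ge 1$.) For uniformity: at step $i$ the algorithm makes a well-defined choice precisely because $J_i(v_1,\dots,v_{i-1})$ contains exactly $l_i\ge 1$ integers regardless of the earlier choices, and the running partial tuple stays in $A_i$. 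Hence for a fixed target $v\in\Z^n\cap MI$ the probability that the algorithm outputs it is, by the chain rule, $\prod_{i=1}^n 1/l_i = 1/|A_n|$, independent of $v$; so the output is uniform on $\Z^n\cap MI$.

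The argument is essentially bookkeeping, and I do not expect a genuine obstacle; the only point requiring a little care is the elementary count of integers in a half-open interval of integer length, together with the observation that this count is insensitive to the (possibly non-integer) endpoints — which is exactly what lets the fibre sizes multiply cleanly and lets the greedy coordinate-by-coordinate sampler be uniform.
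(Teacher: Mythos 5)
Your proof is correct and follows essentially the same route as the paper: rewrite $v\in MI$ via the lower-triangular unipotent inverse as a cascade of conditions $v_i\in[a_i-\sum_{j<i}m_{ij}'v_j,\;a_i-\sum_{j<i}m_{ij}'v_j+l_i)$, and note that each such half-open interval of integer length contains exactly $l_i$ integers regardless of its endpoints. Your explicit induction and chain-rule bookkeeping just spell out what the paper leaves implicit.
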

\begin{algorithm}[H]
\caption{Finding a random lattice point in a transformed box}
\label{triangular_counting_alg}
\begin{algorithmic}[1]
\For{$i\gets1,\dots,n$}
\State Pick an element $\delta_i$ of $\{0,\dots,l_i-1\}$ uniformly at random.
\State $v_i \gets \left\lceil a_i - \sum_{j<i} m_{ij}'v_j\right\rceil + \delta_i$
\EndFor
\end{algorithmic}
\end{algorithm}
\begin{proof}
By definition, for any $v\in\R^n$, we have $v\in MI$ if and only if
\[
\sum_j m_{ij}'v_j \in [a_i,a_i+l_i)\qquad\textnormal{for all }i.
\]
The inverse matrix $M^{-1}$ is also lower-triangular and unipotent, so the sum on the left-hand side is $v_i + \sum_{j<i}m_{ij}'v_j$.

Hence, $v\in MI$ if and only if
\[
a_i - \sum_{j<i}m_{ij}'v_j \leq v_i < a_i - \sum_{j<i}m_{ij}'v_j + l_i\qquad\textnormal{for all }i.
\]
The integer solutions $v_i$ to these inequalities are
\[
v_i = \left\lceil a_i - \sum_{j<i} m_{ij}'v_j\right\rceil + \delta_i\qquad\textnormal{for }\delta_i\in\{0,\dots,l_i-1\}.
\qedhere
\]
\end{proof}

Our algorithm will work with approximations of random real numbers and we will show that a given precision suffices with large probability to decide whether a particular polynomial inequality $f(x)>0$ holds. The following lemma will help with this analysis. (Here, $\varepsilon$ will be the precision to which we have computed $f(x)$.)

\begin{lemma}\label{small_values}
Let $n\geq1$. For any monic polynomial $f\in\R[X]$ of degree $n$ and any $\varepsilon>0$, the set of $x\in\R$ with $|f(x)|\leq \varepsilon$ has measure at most $2n\varepsilon^{1/n}$.
\end{lemma}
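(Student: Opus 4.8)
The plan is to bound the set by a union of $n$ short intervals centered at the real parts of the (complex) roots of $f$. Since $f$ is monic of degree $n$, factor it over $\mathbb{C}$ as $f(X)=\prod_{i=1}^n(X-\alpha_i)$ with $\alpha_i\in\mathbb{C}$. For a fixed real $x$ and any $\alpha\in\mathbb{C}$ we have
\[
|x-\alpha|^2=(x-\operatorname{Re}\alpha)^2+(\operatorname{Im}\alpha)^2\geq (x-\operatorname{Re}\alpha)^2,
\]
so $|x-\alpha|\geq|x-\operatorname{Re}\alpha|$. Writing $\beta_i:=\operatorname{Re}\alpha_i$, this yields the pointwise lower bound $|f(x)|=\prod_{i=1}^n|x-\alpha_i|\geq\prod_{i=1}^n|x-\beta_i|$ for every $x\in\R$, i.e.\ it suffices to treat the real-rooted polynomial $\prod_i(X-\beta_i)$.

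Now suppose $x\in\R$ satisfies $|f(x)|\leq\varepsilon$. Then $\prod_{i=1}^n|x-\beta_i|\leq\varepsilon$, hence $\bigl(\min_i|x-\beta_i|\bigr)^n\leq\varepsilon$, so $|x-\beta_i|\leq\varepsilon^{1/n}$ for at least one index $i$ (a pigeonhole step). Therefore
\[
\{x\in\R:\ |f(x)|\leq\varepsilon\}\ \subseteq\ \bigcup_{i=1}^n\bigl[\beta_i-\varepsilon^{1/n},\ \beta_i+\varepsilon^{1/n}\bigr],
\]
a union of $n$ intervals each of length $2\varepsilon^{1/n}$. The set on the left is closed, hence measurable, and its measure is at most $\sum_{i=1}^n 2\varepsilon^{1/n}=2n\varepsilon^{1/n}$, as claimed.

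I do not expect a genuine obstacle here: the only point requiring a moment's thought is the reduction from the complex roots $\alpha_i$ to their real parts $\beta_i$, which is exactly what turns the statement into the trivial pigeonhole observation above. (If one wanted the slightly sharper constant $2n/(n!)^{1/n}$, one could instead choose $n+1$ points $x_0<\dots<x_n$ inside the set whose consecutive gaps are each at least $\tfrac1n$ times its measure — using that $t\mapsto\bigl|\{s\leq t:\,|f(s)|\leq\varepsilon\}\bigr|$ is $1$-Lipschitz — and feed them into the divided-difference identity $\sum_{i=0}^n f(x_i)/\prod_{j\neq i}(x_i-x_j)=1$; but this refinement is not needed for the stated bound.)
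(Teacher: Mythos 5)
Your proof is correct and follows essentially the same route as the paper: factor $f$ over $\C$, observe that $|f(x)|\leq\varepsilon$ forces $x$ to be within $\varepsilon^{1/n}$ of some root, and cover the set by $n$ intervals of length $2\varepsilon^{1/n}$. The only cosmetic difference is that you first project the complex roots to their real parts, whereas the paper keeps the complex roots and notes that the real points within distance $\varepsilon^{1/n}$ of a complex root form an interval of length at most $2\varepsilon^{1/n}$.
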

\begin{proof}
Let $r_1,\dots,r_n$ be the complex roots of $f$, so that $f(X)=(X-r_1)\cdots(X-r_n)$. Any $x\in\R$ with $|f(x)|\leq\varepsilon$ must have distance at most $\varepsilon^{1/n}$ from one of the roots $r_i$. For any given root $r_i$, the set of $x\in\R$ with $|x-r_i|\leq\varepsilon^{1/n}$ forms an interval of length at most $2\varepsilon^{1/n}$. The claim follows by summing over all roots.
\end{proof}

\section{Cubic rings}

In this section, we describe an algorithm to construct random cubic rings.

We first remind the reader of some basic definitions and of Levi's parameterization of cubic rings by binary cubic forms. A \emph{cubic ring} is a (commutative unitary) ring which is isomorphic to $\Z^3$ as a $\Z$-module. A \emph{cubic integral domain} is a cubic ring which is an integral domain. They are exactly the orders in cubic number fields. The \emph{signature} of a cubic integral domain is the number of real embeddings of its field of fractions.

For any ring $R$, let $\V(R)$ denote the set of binary cubic forms $f=aX^3+bX^2Y+cXY^2+dY^3$ with $a,b,c,d\in R$. We can naturally identify $\V(R)$ with $R^4$ by identifying a cubic form $f$ with its coefficient vector $(a,b,c,d)$.

Define an action of $\GL_2(R)$ on $\V(R)$ by
\[
(Mf)(v) = \det(M)^{-1}\cdot f(M^Tv)\qquad\textnormal{for $M\in\GL_2(R)$ and $f\in\V(R)$ and $v\in R^2$}.
\]
The discriminant $\disc(f)$ is a homogeneous degree $4$ polynomial in the coefficients $a,b,c,d$. It is invariant under the action of the group
\[\SL_2^{\pm}(R) = \{g\in\GL_2(R):\det(g)=\pm1\}.
\]
In \cite{levi-cubic}, Levi described a bijection
\[
\GL_2(\Z)\backslash\V(\Z) \longleftrightarrow \{\textnormal{cubic rings}\}
\]
with the following properties:
\begin{enumalpha}
\item If an orbit $\GL_2(\Z)f$ corresponds to the cubic ring $S$, then $\disc(f) = \disc(S)$ and there is a group isomorphism $\Stab_{\GL_2(\Z)}(f)\cong\Aut(S)$.
\item The \emph{irreducible orbits} (orbits consisting of cubic forms $f$ that are irreducible over $\Q$) correspond to the cubic integral domains. If an irreducible orbit $\GL_2(\Z)f$ corresponds to the cubic integral domain $S$, then the signature of $S$ is the number of roots of $f$ in $\vP^1(\R)$.
\item Concretely, the cubic ring $S$ corresponding to an orbit $\GL_2(\Z)f$ with $f=aX^3+bX^2Y+cXY^2+dY^3$ has a $\Z$-basis of the form $(1,\omega_1,\omega_2)$ with
\begin{align*}
\omega_1\cdot\omega_2 &= -ad\cdot 1, \\
\omega_1\cdot\omega_1 &= -ac\cdot 1 - b\cdot\omega_1 + a\cdot\omega_2, \\
\omega_2\cdot\omega_2 &= -bd\cdot 1 - d\cdot\omega_1 + c\cdot\omega_2.
\end{align*}
\end{enumalpha}

Any element $g$ of $\SL_2^\pm(\R)$ can be written uniquely as a product $nak$ with
\[
n\in N(\R) = \left\{\mat{1\\t&1}:t\in\R\right\},
\quad
a\in A(\R) = \left\{\mat{s^{-1}\\&s}:s\in\R^\times\right\},
\quad
k\in O_2(\R).
\]
In fact, the resulting bijection $N(\R)\times A(\R)\times O_2(\R) \leftrightarrow \SL_2^\pm(\R)$ is a diffeomorphism. Let $\dd t,\dd^\times s,\dd^\times k$ be Haar measures on the groups $N(\R)\cong\R$, $A(\R)\cong\R^\times$, and $O_2(\R)$, respectively. Then, the pushforward of the measure $\dd t\cdot s^{-2}\dd^\times s\cdot\dd^\times k$ is a Haar measure $\dd^\times g$ on $\SL_2^\pm(\R)$.

Matrices of the form
\[
n(t):=\mat{1\\t&1}\qquad\textnormal{or}\qquad a(s):=\mat{s^{-1}\\&s}
\]
act on $\V(\R)\cong\R^4$ as
\[
\mat{1\\3t&1\\3t^2&2t&1\\t^3&t^2&t&1}\qquad\textnormal{or}\qquad\mat{s^{-3}\\&s^{-1}\\&&s\\&&&s^3}.
\]
Let $s_{\min}:=\sqrt{\sqrt{3}/2}$. The \emph{Siegel set}
\begin{equation}\label{defF}
\sF := \{n(t)a(s)k:|t|\leq\tfrac12,\ s\geq s_{\min},\ k\in O_2(\R)\} \subset \SL_2^\pm(\R)
\end{equation}
contains the famous \emph{Gauss set}
\begin{equation}\label{defFs}
\sF' := \{n(t)a(s)k: |t|\leq\tfrac12,\ s\geq \sqrt[4]{1-t^2},\ k\in O_2(\R)\} \subset \SL_2^\pm(\R),
\end{equation}
which is a fundamental domain for the left action of $\GL_2(\Z)$ on $\SL_2^\pm(\R)$. More precisely, we have
\[
\sum_{h\in\GL_2(\Z)}\alpha(hg) = 4 \textnormal{ for all }g\in\SL_2^\pm(\R),
\]
where $\alpha(g)=\alpha(t,s)$ for $g=n(t)a(s)k$ with
\begin{equation}\label{defalpha}
\begin{aligned}
\alpha(g) &= 0 &&\textnormal{if $g$ lies outside $\sF'$,}&&\textnormal{i.e., }|t|>\frac12\textnormal{ or }s<\sqrt[4]{1-t^2},\\
\alpha(g) &= 1 &&\textnormal{if $g$ lies in the interior of $\sF'$,}&&\textnormal{i.e., }|t|<\frac12\textnormal{ and }s>\sqrt[4]{1-t^2},\\
\alpha(g) &\in [0,1] &&\textnormal{always}.
\end{aligned}
\end{equation}
(For simplicity, we don't give the formulas for $\alpha(g)$ on the measure $0$ boundary of~$\sF'$. The sum is $4$ because for any matrix $g$ in $\sF'$, the matrices $\smat{\pm1\\&\pm1}g$ also lie in $\sF'$.)

We now fix a signature $r\in\{1,3\}$ and let $\V^r(\R)\subset\V(\R)$ be the set of squarefree cubic forms with exactly $r$ real roots. Let $\W^r(\R)\subset\V^r(\R)$ be the subset consisting of those cubic forms $f$ with $|\disc(f)|=1$. The group $\SL_2^\pm(\R)$ acts transitively on $\W^r(\R)$. The set $\W^r(\R)$ is a $3$-dimensional smooth submanifold of $\V(\R)$. It follows that the pushforward of $\dd^\times g$ along the map $\SL_2^\pm(\R)\ra\W^r(\R)$, $g\mapsto g^{-1}f_0$, is independent of the choice of $f_0\in\W^r(\R)$. Denote this pushforward by $\dd f$. Bhargava's averaging over fundamental domains / thickening of cusps trick (see for example \cite[section 2.2]{bhargava-quartic-counting} or \cite[section 5.3]{bhargava-shankar-tsimerman-cubic-counting}) relies on the following lemma:
\begin{lemma}\label{thickening_lemma}
Fix an integrable subset $U\subseteq\W^r(\R)$, we define the function $\eta$ on $\W^r(\R)$ to be the following average of the indicator functions of linear transforms of $U$ by elements $g$ of the fundamental domain $\sF'$:
\[
\eta(f) = \int_{\sF'}\chi_{gU}(f)\dd^\times g
\]
and let
\[
C = \int_{\W^r(\R)}\chi_U(f)\dd f.
\]
We then have for all $f\in\W^r(\R)$:
\[
\sum_{h\in\GL_2(\Z)}
\eta(h f)
= 4C.
\]
\end{lemma}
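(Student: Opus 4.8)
The plan is to carry out Bhargava's averaging computation while keeping track of the fact that $\sF'$ tiles $\SL_2^\pm(\R)$ under $\GL_2(\Z)$ with multiplicity $4$, as encoded by the weight function $\alpha$ from \eqref{defalpha}. As a first step I would rewrite $\eta$ as an $\alpha$-weighted integral over the whole group: since the boundary of $\sF'$ is a null set in $\SL_2^\pm(\R)$, the weight $\alpha$ agrees with the indicator function $\chi_{\sF'}$ outside a set of measure zero, so for every $f\in\W^r(\R)$,
\[
\eta(f)=\int_{\sF'}\chi_{gU}(f)\,\dd^\times g=\int_{\SL_2^\pm(\R)}\alpha(g)\,\chi_{gU}(f)\,\dd^\times g=\int_{\SL_2^\pm(\R)}\alpha(g)\,\chi_U(g^{-1}f)\,\dd^\times g,
\]
where the last step uses that $f\in gU$ if and only if $g^{-1}f\in U$.

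Next, I would sum this identity over $h\in\GL_2(\Z)$ with $f$ replaced by $hf$, interchanging the sum with the integral (legitimate since all terms are nonnegative). In the summand indexed by $h$, substitute $g=hg'$: left-invariance of the Haar measure $\dd^\times g$ leaves the measure unchanged, while $g^{-1}(hf)=g'^{-1}f$ and $\alpha(g)=\alpha(hg')$. This yields
\[
\sum_{h\in\GL_2(\Z)}\eta(hf)=\int_{\SL_2^\pm(\R)}\Bigl(\sum_{h\in\GL_2(\Z)}\alpha(hg')\Bigr)\chi_U(g'^{-1}f)\,\dd^\times g'=4\int_{\SL_2^\pm(\R)}\chi_U(g'^{-1}f)\,\dd^\times g',
\]
the second equality being the identity $\sum_h\alpha(hg')=4$.

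Finally, by the definition of $\dd f$ as the pushforward of $\dd^\times g$ along the map $g\mapsto g^{-1}f_0$ --- which the excerpt observes is independent of $f_0$, so that we may take $f_0=f$ --- the remaining integral is exactly $\int_{\W^r(\R)}\chi_U\,\dd f=C$. Combining the two displays gives $\sum_{h\in\GL_2(\Z)}\eta(hf)=4C$, as desired.

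I do not expect a serious obstacle here; the only point demanding care is the bookkeeping that replaces the literal integral over $\sF'$ by the $\alpha$-weighted integral over $\SL_2^\pm(\R)$ and then exploits the multiplicity-$4$ property $\sum_h\alpha(hg)=4$. It is also worth noting that $C<\infty$ because $U$ is integrable, so the identity just proved shows that each $\eta(hf)$ is finite --- in fact $\eta(f)\leq 4C$ for all $f$ --- which retroactively confirms that all the integrals appearing above are well-defined.
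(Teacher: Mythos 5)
Your argument is correct and is essentially the paper's own proof: both replace the integral over $\sF'$ by the $\alpha$-weighted integral over $\SL_2^\pm(\R)$, change variables using left-invariance of $\dd^\times g$, invoke the identity $\sum_h\alpha(hg)=4$, and finish with the pushforward definition of $\dd f$. The only (harmless) difference is cosmetic ordering of the steps, plus your explicit remark about the null boundary of $\sF'$, which the paper leaves implicit.
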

\begin{proof}
We have
\begin{align*}
\sum_{h\in\GL_2(\Z)}\eta(hf)
&= \sum_{h\in\GL_2(\Z)}\int_{\sF'}\chi_{gU}(hf)\dd^\times g \\
&= \sum_{h\in\GL_2(\Z)}\int_{\SL_2^\pm(\R)}\alpha(g)\chi_{gU}(hf)\dd^\times g \\
&= \sum_{h\in\GL_2(\Z)}\int_{\SL_2^\pm(\R)}\alpha(g)\chi_{h^{-1}gU}(f)\dd^\times g\\
&= \sum_{h\in\GL_2(\Z)}\int_{\SL_2^\pm(\R)}\alpha(hg)\chi_{gU}(f)\dd^\times g \\
&= 4\int_{\SL_2^\pm(\R)}\chi_{gU}(f)\dd^\times g
= 4\int_{\SL_2^\pm(\R)}\chi_U(g^{-1}f)\dd^\times g\\
&= 4\int_{\W^r(\R)}\chi_U(f)\dd f
= 4C.
\qedhere
\end{align*}
\end{proof}

Let $\|\cdot\|$ be the norm on $\V(\R)$ associated to the following positive definite quadratic form $q$ on $\V(\R)$:
\[
q(aX^3+bX^2Y+cXY^2+dY^3) = 5a^2+b^2+c^2+5d^2+2ac+2bd.
\]

\begin{lemma}
The quadratic form $q$ is positive definite and invariant under the action of the orthogonal group $O_2(\R)\subset\GL_2(\R)$.
\end{lemma}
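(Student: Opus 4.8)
The plan is to recognize $q$ as a multiple of the mean square of $f$ over the unit circle; once that is in hand, both assertions follow with essentially no further work. Concretely, I would first prove the identity
\[
q(f) \;=\; \frac{8}{\pi}\int_0^{2\pi} f(\cos\theta,\sin\theta)^2\,\dd\theta
\qquad\textnormal{for }f = aX^3+bX^2Y+cXY^2+dY^3 .
\]
This is a routine computation: expand the square into the ten monomials $\cos^i\theta\,\sin^j\theta$ with $i+j=6$, integrate term by term, observe that the six terms with $j$ odd integrate to $0$, and use $\int_0^{2\pi}\cos^6\theta\,\dd\theta = \int_0^{2\pi}\sin^6\theta\,\dd\theta = \tfrac{5\pi}{8}$ together with $\int_0^{2\pi}\cos^4\theta\sin^2\theta\,\dd\theta = \int_0^{2\pi}\cos^2\theta\sin^4\theta\,\dd\theta = \tfrac{\pi}{8}$; collecting terms reproduces exactly $\tfrac{\pi}{8}\,(5a^2+b^2+c^2+5d^2+2ac+2bd)$.

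Given the identity, positive-definiteness is immediate: the right-hand side is nonnegative, and it vanishes only when $\theta\mapsto f(\cos\theta,\sin\theta)$ is identically $0$; since this function is a trigonometric polynomial whose coefficients depend linearly on $(a,b,c,d)$ — or, more elementarily, since a nonzero binary cubic form has only finitely many zeros on $\vP^1(\R)$ — this forces $f=0$. For the $O_2(\R)$-invariance I would argue as follows: for $M\in O_2(\R)$ the defining formula $(Mf)(v)=\det(M)^{-1}f(M^Tv)$ and the equality $\det(M)=\pm1$ give $(Mf)(v)^2 = f(M^Tv)^2$; and since $M^T=M^{-1}\in O_2(\R)$ is an origin-fixing isometry of $\R^2$, precomposing with it leaves the integral of any function over the unit circle (with respect to arc length $\dd\theta$) unchanged, so $\int_0^{2\pi}(Mf)(\cos\theta,\sin\theta)^2\,\dd\theta = \int_0^{2\pi}f\big(M^T(\cos\theta,\sin\theta)\big)^2\,\dd\theta = \int_0^{2\pi}f(\cos\theta,\sin\theta)^2\,\dd\theta$, i.e.\ $q(Mf)=q(f)$.

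I do not anticipate a genuine obstacle; the only things to watch are the arithmetic in the monomial integrals and, for invariance, the point that one needs $(\det M)^{-2}=1$ — which is precisely where orthogonality, rather than mere invertibility, of $M$ enters. If one preferred to avoid integrals, positive-definiteness also drops out of the sum-of-squares identity $q(f) = (a+c)^2 + 4a^2 + (b+d)^2 + 4d^2$, and $O_2(\R)$-invariance can be checked on a generating set: invariance under the reflection $(x,y)\mapsto(x,-y)$ (which sends $(a,b,c,d)$ to $(-a,b,-c,d)$) is a one-line check, and invariance under the connected group $SO_2(\R)$ reduces to verifying that the infinitesimal rotation acts on $\V(\R)$ by a $q$-skew-symmetric operator. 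The circle-integral formula is nonetheless the cleaner route, and the one I would write up.
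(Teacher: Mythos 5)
Your proof is correct and takes essentially the same approach as the paper: the paper realizes $q(f)$ as $\tfrac{64}{\pi}\int_{x^2+y^2\leq1}f(x,y)^2\,\dd(x,y)$, which is just your circle integral combined with the radial factor $\int_0^1 r^7\,\dd r$ in polar coordinates, and both arguments then read off positive definiteness and $O_2(\R)$-invariance from the manifest properties of the integral. Your extra remarks (the explicit monomial integrals, the sum-of-squares identity $q(f)=(a+c)^2+4a^2+(b+d)^2+4d^2$, and checking invariance on generators) are all correct but not needed beyond the level of detail the paper itself supplies.
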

\begin{proof}
One can check that
\[
\int_{\{(x,y)\in\R^2:x^2+y^2\leq1\}} f(x,y)^2\dd(x,y)
= \frac{\pi}{64}\cdot q(f).
\]
The left-hand side is clearly positive definite and $O_2(\R)$-invariant.
\end{proof}

\begin{lemma}\label{in_box}
Consider the closed unit ball of radius $1$ in $\V(\R)$. The maximum values of $|a|,|b|,|c|,|d|$ on this set are $\frac12,\frac{\sqrt5}{2},\frac{\sqrt5}{2},\frac12$, respectively.
\end{lemma}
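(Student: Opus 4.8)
The plan is to diagonalize $q$ by completing squares. A direct computation gives
\[
q(aX^3+bX^2Y+cXY^2+dY^3) = (2a)^2 + (a+c)^2 + (b+d)^2 + (2d)^2,
\]
which incidentally reconfirms that $q$ is positive definite. From this identity the bounds on $|a|$ and $|d|$ are immediate: if $q(f)\leq1$ then $(2a)^2\leq1$, so $|a|\leq\frac12$, and equality holds for instance at $f=\frac12X^3-\frac12XY^2$ (where $a+c=b=d=0$); the case of $|d|$ is symmetric.

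For $|c|$ — and, by the symmetry $a\leftrightarrow d$, $b\leftrightarrow c$ of $q$, also for $|b|$ — I would first argue that the maximum is attained at a point with $b=d=0$: dropping the nonnegative terms $(b+d)^2$ and $(2d)^2$ only enlarges the region $\{q\leq1\}$, and for any fixed $a,c$ one can take $b=d=0$ without changing $c$. Hence it suffices to maximize $|c|$ subject to $(2a)^2+(a+c)^2\leq1$. On the boundary ellipse, write $2a=\cos\theta$ and $a+c=\sin\theta$; then $c=\sin\theta-\tfrac12\cos\theta$, whose extreme values are $\pm\sqrt{1+\tfrac14}=\pm\tfrac{\sqrt5}{2}$ by the usual amplitude formula (equivalently, Cauchy--Schwarz applied to $(\sin\theta,\cos\theta)$ and $(1,-\tfrac12)$). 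Compactness of the closed unit ball guarantees these maxima are genuinely attained.

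There is no serious obstacle here; the only points requiring a little care are checking the sum-of-squares identity with the exact constants $2,1,1,2$, and justifying the reduction to $b=d=0$ when bounding $|c|$ (and $a=c=0$ when bounding $|b|$). An alternative that sidesteps the optimization entirely is to invoke the general fact that $\max\{\ell(f):q(f)\leq1\}=\sqrt{\ell^T Q^{-1}\ell}$ for a linear functional $\ell$, where $Q$ is the Gram matrix of $q$ in the coordinates $(a,b,c,d)$. Here $Q$ is block diagonal with blocks $\left(\begin{smallmatrix}5&1\\1&1\end{smallmatrix}\right)$ on $(a,c)$ and $\left(\begin{smallmatrix}1&1\\1&5\end{smallmatrix}\right)$ on $(b,d)$, each of determinant $4$; inverting, the diagonal entries of $Q^{-1}$ are $\tfrac14,\tfrac54,\tfrac54,\tfrac14$, and their square roots are exactly the claimed maxima $\tfrac12,\tfrac{\sqrt5}{2},\tfrac{\sqrt5}{2},\tfrac12$.
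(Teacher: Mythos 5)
Your proof is correct, and it takes a somewhat different route from the paper's. The paper argues via the first-order condition: at a maximizer of $|a|$ on the ball, the partial derivatives of $q$ with respect to $b,c,d$ must vanish, and solving these equations exhibits the extremal points $\pm\frac12(X^3-XY^2)$ and $\pm\frac{\sqrt5}{2}(X^2Y-\frac15Y^3)$ explicitly. You instead diagonalize $q$ by the (correct) sum-of-squares identity $q=(2a)^2+(a+c)^2+(b+d)^2+(2d)^2$, which makes the bound $|a|\le\frac12$ immediate, and you reduce the bound on $|c|$ to a two-variable problem by the valid observation that replacing $(a,b,c,d)$ by $(a,0,c,0)$ stays in the ball and preserves $c$; the symmetry $a\leftrightarrow d$, $b\leftrightarrow c$ (coming from $X\leftrightarrow Y$) then handles $|b|$ and $|d|$. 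Your closing remark via the inverse Gram matrix --- $\max\{\ell(f):q(f)\le1\}=\sqrt{\ell^TQ^{-1}\ell}$, with diagonal entries $\frac14,\frac54,\frac54,\frac14$ of $Q^{-1}$ --- is the cleanest version of the argument and subsumes all four cases at once. The paper's approach has the minor advantage of producing the extremal forms themselves (which it records), whereas yours gets the numerical bounds with less computation; both are complete.
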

\begin{proof}
To achieve the maximum value of $|a|$, for instance, the derivatives of $q(f)$ with respect to $b,c,d$ need to vanish. One easily checks that this occurs exactly at the two points $\pm\frac12(X^3-XY^2)$.

The maximum for $|b|$ occurs at $\pm\frac{\sqrt5}{2}(X^2Y-\frac15Y^3)$. The maxima for $|c|$ and $|d|$ follow by symmetry.
\end{proof}

Consider the open unit ball in $\V(\R)$:
\[
B = \{f\in\V(\R):\|f\|<1\}
\]
By \Cref{in_box}, $B$ is contained in the box $I:=I_1\times\cdots\times I_4$ given by
\begin{align*}
I_1 &:= \textstyle\{a\in\R:|a|\leq\frac12\}, & I_2 &:= \textstyle\{b\in\R:|b|\leq\frac{\sqrt5}2\},\\
I_3 &:= \textstyle\{c\in\R:|c|\leq\frac{\sqrt5}2\}, & I_4 &:= \textstyle\{d\in\R:|d|\leq\frac12\}.
\end{align*}
Hence, for any $\lambda,s>0$, the set $\lambda\cdot a(s)B$ is contained in the box $\lambda\cdot a(s)I$ with side lengths
\begin{align*}
l_1(\lambda,s) &:= \lambda s^{-3}, &
l_2(\lambda,s) &:= \sqrt5 \lambda s^{-1}, \\
l_3(\lambda,s) &:= \sqrt5 \lambda s, &
l_4(\lambda,s) &:= \lambda s^{3}.
\end{align*}
The box $\lambda\cdot a(s)I$ is in turn contained in the box $I'(\lambda,s)=I_1'(\lambda,s)\times\cdots\times I_4'(\lambda,s)$ with integer side lengths $l_i'(\lambda,s)$, where
\[
I_i'(\lambda,s):=\left[-\tfrac12l_i'(\lambda,s),\tfrac12l_i'(\lambda,s)\right)\textnormal{ with }l_i'(\lambda,s) := \lfloor1+l_i(\lambda,s)\rfloor\in\Z\textnormal{ for }i=1,\dots,4.
\]
Let $s_{\max}=\sqrt[3]{\lambda/2}$. For $s_{\min}\leq s\leq s_{\max}$, we can bound the side lengths $l_i'(\lambda,s)$ from above as follows:
\begin{equation}\label{floorbound}
\begin{aligned}
l_1'(\lambda,s)&\leq L_1'(\lambda)\cdot s^{-3}, &
l_2'(\lambda,s)&\leq L_2'(\lambda)\cdot s^{-1}, \\
l_3'(\lambda,s)&\leq L_3'(\lambda)\cdot s, &
l_4'(\lambda,s)&\leq L_4'(\lambda)\cdot s^{3},
\end{aligned}
\end{equation}
where
\begin{align*}
L_1'(\lambda) &:= s_{\max}^3+\lambda, &
L_2'(\lambda) &:= s_{\max}+\sqrt5\lambda, \\
L_3'(\lambda) &:= s_{\min}^{-1}+\sqrt5\lambda, &
L_4'(\lambda) &:= s_{\min}^{-3}+\lambda.
\end{align*}

Fix a radius $R>0$ large enough so that $R\cdot B\cap \W^r(\R)\neq\emptyset$. (For example, you can take $R=7/4$ for $r=1$ and $R=5/4$ for $r=3$.)

We now give an algorithm for computing random irreducible $\GL_2(\Z)$-orbits. This first algorithm uses real arithmetic and we later explain how to deal with precision issues.

\begin{theorem}\label{cubic-thm}
Let $T\geq1$ and $\lambda=RT^{1/4}$. Let $s_{\min} = \sqrt{\sqrt{3}/2}$ and $s_{\max} = \sqrt[3]{\lambda/2}$.
\begin{enumalpha}
\item The following algorithm either fails or produces an element $f$ of an irreducible $\GL_2(\Z)$-orbit in $\V^r(\Z)$ with $0<|\disc(f)|\leq T$.
\item The probability of returning an element of any given such orbit $\GL_2(\Z)f$ is (for fixed $r$ and $T$) proportional to $1/\#\Stab_{\GL_2(\Z)}(f)$.
\item\label{cubic-prob} The probability of success is $>p_0$ for a constant $p_0$ (depending only on $r$ and $R$, but not on $T$) if there is at least one such orbit $\GL_2(\Z)f$.
\end{enumalpha}
\end{theorem}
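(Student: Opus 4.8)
The plan is to write the success probability as a ratio: after unwinding the algorithm it takes the form (a quantity of size $\asymp_{R,r}T$)$^{-1}$ times (the weighted number of valid orbits, which is $\gg_{R,r}T$), so the ratio is bounded below by a constant; the finitely many small values of $T$ that nonetheless admit a valid orbit are then handled separately by a continuity argument.

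First I would make the success probability completely explicit. The algorithm draws $g=n(t)a(s)k$ from the truncation $\sF'\cap\{s\le s_{\max}\}$ of the Gauss set, then draws an integer point $v\in\Z^4$ uniformly from a box containing $g\lambda B=n(t)a(s)\lambda B$ --- here one uses that $\lambda B$ is $O_2(\R)$-invariant, so $k$ disappears and $n(t)$ acts by a lower-triangular unipotent matrix, making \Cref{triangular_counting_alg} applicable; that box has $M(g):=l_1'(\lambda,s)\cdots l_4'(\lambda,s)$ points, and $M(g)\le N(\lambda):=L_1'(\lambda)\cdots L_4'(\lambda)$ by \eqref{floorbound} --- and finally applies the rejection steps that make part (b) of \Cref{cubic-thm} hold. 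The outcome is that the probability of outputting a fixed valid form $f_0$ with $|\disc(f_0)|=D$ is
\[
P(f_0)=\frac{\rho(D)}{Z(\lambda)\,N(\lambda)}\int_{\{g\in\sF':\,s\le s_{\max},\ f_0\in g\lambda B\}}\dd^\times g,
\]
with $Z(\lambda):=\int_{\sF'\cap\{s\le s_{\max}\}}\dd^\times g$ and $\rho(D)\in(0,1]$ the acceptance factor of the discriminant-dependent rejection. The technical heart is that truncating at $s\le s_{\max}$ loses nothing: if $f_0\in g\lambda B$ for $g=n(t)a(s)k$ with $|t|\le\tfrac12$ and $f_0$ is an \emph{irreducible integral} form, then $Y\nmid f_0$, so its leading coefficient $a_0$ is a nonzero integer; since $\|g^{-1}f_0\|=\|a(s)^{-1}n(t)^{-1}f_0\|$ by $O_2(\R)$-invariance of $\|\cdot\|$, and $n(t)^{-1}=n(-t)$ fixes the leading coefficient while $a(s)^{-1}$ multiplies it by $s^3$, the leading coefficient of $a(s)^{-1}n(t)^{-1}f_0$ is $s^3a_0$, so \Cref{in_box} gives $\|g^{-1}f_0\|\ge2s^3|a_0|\ge2s^3$, and $\|g^{-1}f_0\|<\lambda$ forces $s<\sqrt[3]{\lambda/2}=s_{\max}$.

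Summing $P(hf_0)$ over $h\in\GL_2(\Z)$, I would then unfold the integral over the truncated Gauss set into an integral over all of $\SL_2^\pm(\R)$; this is legitimate because, by the previous paragraph, the region that gets added contributes $0$. Using that $\dd f$ is the pushforward of $\dd^\times g$ --- exactly as in the proof of \Cref{thickening_lemma} --- one obtains, for each irreducible orbit $\GL_2(\Z)f_0$ of signature $r$ with $0<|\disc(f_0)|=D\le T$,
\[
P(\GL_2(\Z)f_0)=\frac{\rho(D)}{Z(\lambda)\,N(\lambda)\,\#\Stab_{\GL_2(\Z)}(f_0)}\int_{\W^r(\R)}\chi_{R(T/D)^{1/4}B}(f)\,\dd f.
\]
The discriminant-rejection step is designed so that $\rho(D)\int_{\W^r(\R)}\chi_{R(T/D)^{1/4}B}\dd f=V_R:=\int_{\W^r(\R)}\chi_{RB}\dd f$; this is possible because $R(T/D)^{1/4}\ge R$ for $D\le T$, so the $\dd f$-integral in question is $\ge V_R$, and $V_R>0$ since $\dd f$ is a smooth positive measure on $\W^r(\R)$ and $RB\cap\W^r(\R)$ is nonempty by the choice of $R$. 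This proves (b), and summing over orbits gives
\[
P(\text{success})=\frac{V_R}{Z(\lambda)\,N(\lambda)}\sum_{\GL_2(\Z)f_0}\frac{1}{\#\Stab_{\GL_2(\Z)}(f_0)},
\]
the sum being over all such orbits.

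It remains to bound the two factors. Since $\lambda=RT^{1/4}$ and $s_{\max}=\sqrt[3]{\lambda/2}$ grows more slowly than $\lambda$, each $L_i'(\lambda)$ lies between two positive constant multiples of $\lambda$ (the constants depending only on $R$), so $N(\lambda)\le C_N\lambda^4=C_NR^4T$, while $Z(\lambda)\le\int_{\sF'}\dd^\times g=\tfrac{\pi}{6}\operatorname{vol}(O_2(\R))=:Z_0<\infty$ because $\GL_2(\Z)$ is a lattice in $\SL_2^\pm(\R)$. By Davenport--Heilbronn \cite{davenport-heilbronn-cubic-fields}, the weighted number of irreducible orbits of signature $r$ with $|\disc|\le T$ is $\sim c_rT$ with $c_r>0$, hence $\ge\tfrac{c_r}{2}T$ for $T\ge T_1=T_1(r)$; therefore $P(\text{success})\ge\frac{V_Rc_r}{2Z_0C_NR^4}>0$ whenever $T\ge T_1$. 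For $T<T_1$ with at least one valid orbit, the smallest $|\disc|$ of a cubic integral domain of signature $r$ is a fixed integer $D_0(r)\ge23$, so $T\in[D_0(r),T_1)$; on each interval between consecutive occurring discriminant values (finitely many below $T_1$) the orbit sum above is constant and positive while $Z(\lambda)$ and $N(\lambda)$ are continuous and strictly positive in $\lambda$, so $P(\text{success})$ is continuous and strictly positive there, hence bounded below, and combining the two ranges yields an admissible $p_0$ depending only on $r$ and $R$. The main obstacle is the middle step: pinning down the rejection steps so that (b) holds exactly and, above all, justifying that the Gauss set may be truncated at $s\le s_{\max}$ without discarding any irreducible integral form --- this is precisely what makes the unfolding, and hence the clean constant $V_R/(Z(\lambda)N(\lambda))$, valid; the remainder is bookkeeping together with the standard linear lower bound for the orbit count.
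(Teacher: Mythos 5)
Your skeleton matches the paper's proof: cancel the line-\ref{probline} rejection probability against the box count $l_1'\cdots l_4'$ from \Cref{triangular_counting} to get a clean $1/(L_1'(\lambda)\cdots L_4'(\lambda))$, cut off the cusp using that an irreducible integral form has leading coefficient $|a|\geq1$ (your derivation $\|g^{-1}f_0\|\geq 2s^3|a_0|$ via \Cref{in_box} is the same inequality the paper gets from $f\in\lambda\cdot n(t)a(s)I$), unfold over the fundamental domain via \Cref{thickening_lemma}, and combine $L_i'(\lambda)\ll\lambda$ with the Davenport--Heilbronn lower bound $\gg T$ on the weighted orbit count. Those parts are correct.

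The genuine gap is exactly the step you flag as "the main obstacle". You posit a discriminant-dependent rejection with a tuned acceptance probability $\rho(D)=V_R/\int_{\W^r(\R)}\chi_{R(T/D)^{1/4}B}\,\dd f$, but \Cref{random_cubic_alg} contains no such coin flip (and that constant would be a transcendental volume one could not readily compute). What the algorithm actually does in its final test is accept $f$ only if $f\in R|\disc(f)|^{1/4}\cdot n(t)a(s)B$: the acceptance \emph{region} itself scales with $|\disc(f)|^{1/4}$. This is precisely what makes (b) hold with no correction factor: since $\disc$ is homogeneous of degree $4$, $|\disc(f)|^{-1/4}f\in\W^r(\R)$, and
\[
\chi_{R|\disc(f)|^{1/4}\cdot gB}(f)=\chi_{gU}\bigl(|\disc(f)|^{-1/4}f\bigr),\qquad U:=R\cdot B\cap\W^r(\R),
\]
so applying \Cref{thickening_lemma} to the point $|\disc(f)|^{-1/4}f$ yields $\sum_h P(hf)=4\bigl(\int_{\W^r(\R)}\chi_U\,\dd f\bigr)/\bigl(L_1'(\lambda)\cdots L_4'(\lambda)\int_\sF\dd^\times g\bigr)$, already independent of $D$. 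Your formula with $\chi_{R(T/D)^{1/4}B}$ instead corresponds to accepting on $f\in\lambda gB$, i.e., to a different algorithm. Two minor further points: the normalizing constant in your $P(f_0)$ should be the Siegel-set volume $\int_\sF\dd^\times g$ (lines \ref{checks1} is a rejection, not a conditioning on the truncated Gauss set) --- harmless for the bounds since both volumes are finite constants; and for the small-$T$ range your continuity argument works but is more labor than needed, since "at least one orbit" already gives $\sum 1/\#\Stab\geq\frac13\gg T$ when $T=\O(1)$.
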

\begin{algorithm}[H]
\caption{Finding a random orbit of cubic forms}\label{random_cubic_alg}
\begin{algorithmic}[1]
\State Pick an element $t$ of $(-\frac12,\frac12)$ uniformly at random.
\State Pick an element $s$ of $(s_{\min},\infty)$ at random with probability measure proportional to $s^{-2}\dd^\times s$.
\State \Return FAIL \textbf{unless} $\sqrt[4]{1-t^2}<s<s_{\max}$.\label{checks1}
\State \Return FAIL \textbf{with probability} $1-\frac{l_1'(\lambda,s)\cdots l_4'(\lambda,s)}{L_1'(\lambda)\cdots L_4'(\lambda)}$.\label{probline}
\State Pick a point $f\in\V(\Z)\cap n(t)I'(\lambda,s)$ uniformly at random using \Cref{triangular_counting_alg}.\label{pickline}
\State \Return FAIL \textbf{unless} $f\in\V^r(\R)$ and $0<|\disc(f)|\leq T$ and
and $f$ is irreducible over $\Q$ and
$f \in R|\disc(f)|^{1/4}\cdot n(t)a(s)B$.
\State \Return $f$.
\end{algorithmic}
\end{algorithm}
\begin{remark}\label{repeat}
Repeat the algorithm until it succeeds. According to \ref{cubic-prob}, if there is such an orbit, it will succeed after at most $1/p_0$ attempts on average.
\end{remark}
\begin{remark}\label{smallest}
The smallest absolute discriminant of an irreducible orbit with signature $r=3$ is $49$. The smallest absolute discriminant of an irreducible orbit with signature $r=1$ is $23$. (See for example tables of number fields of degree $3$ and small discriminant, such as the one available at \cite{lmfdb}.)
\end{remark}
\begin{remark}\label{uniform}
The following algorithm satisfies a) and c), but every orbit has the same probability of being returned: Compute a random orbit $\GL_2(\Z)f$ using \Cref{random_cubic_alg}. Return this orbit $\GL_2(\Z)f$ with probability $\#\Stab_{\GL_2(\Z)}(f)/3$ and return FAIL otherwise. (If $f$ corresponds to the cubic integral domain $S$ with field of fractions $K$, the group $\Stab_{\GL_2(\Z)}(f)\cong\Aut(S)\subseteq\Aut(K)$ is either trivial or cyclic of order $3$.) %(The group $\Stab_{\GL_2(\Z)}(f)\cong\Aut(S)$ is isomorphic to $C_3$ if $\disc(f)$ is a square and trivial otherwise.)
\end{remark}
\begin{proof}[Proof of \Cref{cubic-thm}]
For $s_{\min}\leq s\leq s_{\max}$, we have
\[
0 \leq \frac{l_1'(\lambda,s)\cdots l_4'(\lambda,s)}{L_1'(\lambda)\cdots L_4'(\lambda)} \leq1,
\]
according to (\ref{floorbound}), so line \ref{probline} of the algorithm makes sense. Claim a) is clear.

Now, consider any irreducible cubic form $f\in\V^r(\Z)$ with $0<|\disc(f)|\leq T$.

Since $|t|\leq\frac12$, we have $s_{\min}=\sqrt{\sqrt3/2}\leq\sqrt[4]{1-t^2}$.
The algorithm returns the cubic form $f$ with probability
\[
P(f) :=
\frac{
\int_{-1/2}^{1/2} \int_{\sqrt[4]{1-t^2}}^{s_{\max}}
p(f,t,s)\cdot
s^{-2}\dd^\times s\,\dd t
}
{
\int_{-1/2}^{1/2}\int_{s_{\min}}^\infty s^{-2}\dd^\times s\,\dd t
},
\]
where
\[
p(f,t,s) := %\\\qquad
\frac{l_1'(\lambda,s)\cdots l_4'(\lambda,s)}{L_1'(\lambda)\cdots L_4'(\lambda)}\cdot
\frac{\chi_{n(t)I'(\lambda,s)}(f)}{\#(\V(\Z)\cap n(t)I'(\lambda,s))}\cdot
\chi_{R|\disc(f)|^{1/4}\cdot n(t)a(s)B}(f).
\]
We have $\#(\V(\Z)\cap n(t)I'(\lambda,s)) = l_1'(\lambda,s)\cdots l_4'(\lambda,s)$ by \Cref{triangular_counting}.

Since $0<|\disc(f)|\leq T$ and $\lambda=RT^{1/4}$, we moreover have
\begin{equation}\label{inclusions}
R|\disc(f)|^{1/4}\cdot n(t)a(s)B \subseteq\lambda\cdot n(t)a(s)B\subseteq\lambda\cdot n(t)a(s)I\subseteq n(t)I'(\lambda,s).
\end{equation}
Hence,
\[
p(f,t,s) = \frac{\chi_{R|\disc(f)|^{1/4}\cdot n(t)a(s)B}(f)}{L_1'(\lambda)\cdots L_4'(\lambda)}.
\]
Moreover, since $f=aX^3+\cdots+dY^3$ is irreducible, we have $a\neq0$, so $|a|\geq1$. On the other hand, if $p(f,t,s)\neq0$, then by (\ref{inclusions}), we have $f\in\lambda\cdot n(t)a(s)I$, which implies $|a|\leq\frac12 \lambda s^{-3}$. Therefore, if $p(f,t,s)\neq0$, then $1\leq\frac12 \lambda s^{-3}$, so $s\leq\sqrt[3]{\lambda/2}=s_{\max}$. (Bhargava calls this inequality ``cutting off the cusp''.) Hence, the upper bound on $s$ in the integral in the numerator of $P(f)$ can be omitted without changing the value of the integral:
\begin{align*}
P(f) &=
\frac{
\int_{-1/2}^{1/2} \int_{\sqrt[4]{1-t^2}}^{\infty}
p(f,t,s)\cdot
s^{-2}\dd^\times s\,\dd t
}
{
\int_{-1/2}^{1/2}\int_{s_{\min}}^\infty s^{-2}\dd^\times s\,\dd t
}. %\\
\end{align*}
We now multiply both the numerator and the denominator by the finite number $\int_{O_2(\R)}\dd^\times k$. Since $B$ is $O_2(\R)$-invariant and since the measure $\dd t\cdot s^{-2}\dd^\times s\cdot \dd^\times k$ on $N(\R)\times A(\R)\times O_2(\R)$ corresponds to the Haar measure $\dd^\times g$ on $\SL_2^\pm(\R)$, it follows together with the definitions of $\sF$ and $\sF'$ in (\ref{defF}) and (\ref{defFs}) that
\[
P(f) = \frac{\int_{\sF'}p(f,g)\dd^\times g}{\int_{\sF}\dd^\times g}
\]
with
\[
p(f,g) := \frac{
\chi_{R|\disc(f)|^{1/4}\cdot gB}(f)}{L_1'(\lambda)\cdots L_4'(\lambda)}.
\]
Note that $|\disc(|\disc(f)|^{-1/4}\cdot f)|=1$ and therefore $|\disc(f)|^{-1/4}\cdot f\in\W^r(\R)$ because $\disc(f)$ is a homogeneous polynomial of degree $4$ in the coefficients of $f$. Setting $U:=R\cdot B\cap\W^r(\R)$, we can therefore rewrite $p(f,g)$ as
\[
p(f,g) := \frac{
\chi_{gU}(|\disc(f)|^{-1/4}\cdot f)}{L_1'(\lambda)\cdots L_4'(\lambda)},
\]
so
\[
P(f) =
\frac{
\int_{\mathcal F'}
\chi_{gU}(|\disc(f)|^{-1/4}\cdot f)\cdot
\dd^\times g
}
{
L_1'(\lambda)\cdots L_4'(\lambda)\cdot \int_{\mathcal F} \dd^\times g
}
\]
By \Cref{thickening_lemma} applied to the element $|\disc(f)|^{-1/4}\cdot f$ of $\W^r(\R)$, we obtain
\[
\sum_{h\in\GL_2(\Z)}P(hf) = D(\lambda) :=
\frac{C}{L_1'(\lambda)\cdots L_4'(\lambda)},
\]
with
\[
C = \frac{4\cdot\int_{\W^r(\R)}\chi_{U}(f')\dd f'}{\int_\sF\dd^\times g}.
\]
The constant $C$ is positive because $U$ is a nonempty open subset of $\W^r(\R)$ and $\int_{\sF}\dd^\times g<\infty$.

On the other hand,
\[
\sum_{h\in\GL_2(\Z)}P(hf) = \#\Stab_{\GL_2(\Z)}(f)\cdot\sum_{f'\in\GL_2(\Z)f}P(f').
\]
We conclude that for any irreducible cubic form $f\in\V^r(\Z)$ with $0<|\disc(f)|\leq T$, we have the following probability of returning an element of its $\GL_2(\Z)$-orbit:
\[
\sum_{f'\in\GL_2(\Z)f} P(f') = \frac{D(\lambda)}{\#\Stab_{\GL_2(\Z)}(f)}.
\]
Since $D(\lambda)$ is independent of $f$, this proves b).

For c), we sum over all $\GL_2(\Z)$-orbits of irreducible cubic forms $f\in\V^r(\Z)$ with $0<|\disc(f)|\leq T$. The number of such orbits is $\asymp T$ for $T\ra\infty$. (See \cite{davenport-heilbronn-cubic-fields}.) In particular, it is $\gg T$ as long as there is at least one such orbit. Since $\Stab_{\GL_2(\Z)}(f)$ has size at most $3$ by \Cref{uniform}, the sum of $1/\#\Stab_{\GL_2(\Z)}(f)$ over all orbits is also $\gg T$.

By definition, $L_i'(\lambda)\ll\lambda$ for all $i$, so $D(\lambda)\gg\lambda^{-4}\gg T^{-1}$.

It follows that the probability of success is
\begin{align*}
\sum_{\substack{f\in\V^r(\Z)\\\textnormal{irreducible}\\0<|\disc(f)|\leq T}}P(f)
&= \sum_{\substack{[f]\in\GL_2(\Z)\backslash\V^r(\Z)\\\textnormal{irreducible}\\0<|\disc(f)|\leq T}}\sum_{f'\in\GL_2(\Z)f}P(f') \\
&= \sum_{\substack{[f]\in\GL_2(\Z)\backslash\V^r(\Z)\\\textnormal{irreducible}\\0<|\disc(f)|\leq T}} \frac{D(\lambda)}{\#\Stab_{\GL_2(\Z)}(f)} \gg T\cdot T^{-1} = 1.
\end{align*}
This proves c).
\end{proof}

The above algorithm uses real arithmetic. In practice, we need to work with approximations of those real numbers to finite precision. Note that we need to be able to decide the inequalities appearing in the algorithm and to compute $\lfloor x\rfloor$ or $\lceil x\rceil$ for various real numbers $x$ appearing in the algorithm. By analyzing the probability with which a given precision suffices to decide the inequalities and to compute the floor and ceiling values, we show:

\begin{theorem}\label{cubicimpl}
\Cref{random_cubic_alg} can be implemented on a random access machine (with a random bit generator) with expected running time $\widetilde\O(\log T)$.
\end{theorem}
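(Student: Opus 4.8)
The plan is to run \Cref{random_cubic_alg} with \emph{lazy} generation of the random reals (drawing fair bits on demand) and interval arithmetic for all real computations, and to bound the precision that is actually needed by means of \Cref{small_values}. Concretely, I would realize the two random reals as follows: write $t=t'-\tfrac12$ with $t'$ uniform on $(0,1)$, and sample $s$ by drawing $w$ uniform on $(0,1)$ and setting $s^2=\tfrac{\sqrt3}{2(1-w)}$; a short computation shows this gives $s$ the prescribed law, proportional to $s^{-2}\dd^\times s$ on $(s_{\min},\infty)$, since $w=1-s_{\min}^2 s^{-2}$ is the corresponding cumulative distribution function. The bit streams of $t'$ and $w$, the bits used inside \Cref{triangular_counting_alg} to choose the $\delta_i$, and the bits for the coin flip in line~\ref{probline} together form the algorithm's random tape, and the running time is a deterministic function of this tape and of $T$.

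Next I would reduce every step to a small set of primitives. Apart from deciding irreducibility of $f$, each operation is either exact integer arithmetic --- evaluating $\disc(f)$ (a fixed degree-$4$ polynomial in the $\O(\log T)$-bit integer coefficients of $f$); comparing the integer $\disc(f)$ with $0$ and $\pm T$ (which decides $f\in\V^r(\R)$ and $0<|\disc(f)|\le T$); forming the integer $l_1'\cdots l_4'$; drawing the $\delta_i$ --- all costing $\widetilde\O(\log T)$, or else one of two real primitives: (i) decide the sign of a value $F$, or (ii) compute $\lfloor G\rfloor$ (equivalently $\lceil G\rceil$) of a value $G$. In each instance $F$ and $G$ are explicit algebraic functions of $t'$, $w$, the already-chosen integers, and the fixed constants $R,\sqrt5,T^{1/4},s_{\min},s_{\max}$, and after clearing radicals each comparison becomes the question whether a fixed polynomial in one of the random variables is positive: for example $\sqrt[4]{1-t^2}<s$ becomes $4(1-t^2)(1-w)^2<3$; the cutoff $s<s_{\max}$ becomes $\sqrt3<2(1-w)(\lambda/2)^{2/3}$; $f\in R|\disc(f)|^{1/4}n(t)a(s)B$ becomes $q\bigl((n(t)a(s))^{-1}f\bigr)^2<R^4|\disc(f)|$ (both sides nonnegative), which after multiplying by a suitable power of $s^2(1-w)$ is a polynomial inequality in $t'$ and $w$; and $\lfloor1+l_i(\lambda,s)\rfloor$ (as well as the ceilings inside \Cref{triangular_counting_alg}) is read off from signs of expressions like $l_i^2-n^2$ for integers $n$ near the answer. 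The polynomials that arise have degree $\O(1)$, and their leading coefficients --- on the event that no earlier line of the algorithm failed --- are either nonzero integers or powers of $T^{\O(1)}$-bounded quantities that are bounded below by $T^{-\O(1)}$; a floor computation uses finitely many signs, one per integer $n$ in the (a priori $T^{\O(1)}$-sized) range of the floored quantity. Running primitive (i) or (ii) with interval arithmetic at precision $2^{-m}$ costs $\widetilde\O(m+\log T)$ (with fast multiplication) and succeeds as soon as $|F|>2^{-m}T^{\O(1)}$ (resp.\ $\mathrm{dist}(G,\Z)>2^{-m}T^{\O(1)}$); doubling $m$ until success, one primitive costs $\widetilde\O(N+\log T)$, where $2^{-N}$ is the first precision that works.

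The core of the proof is to show $\mathbb E[N]=\O(\log T)$ for each of the $\O(1)$ primitives. I would bound the event $\{N>m\}$ by the \emph{unconditional} event that $|F|\le2^{-m}T^{\O(1)}$ (resp.\ that $G$ lies within $2^{-m}T^{\O(1)}$ of $\Z$); this is legitimate because $N=0$ whenever an earlier line returned FAIL, and on the complement of that failure event the cusp cutoffs $s<s_{\max}$ and $\sqrt[4]{1-t^2}<s$ --- the latter forcing $w<1-\Omega(T^{-1/6})$, hence $1/(1-w)=\O(T^{1/6})$ and $s=\O(T^{1/12})$ --- keep every quantity entering $F$ and $G$ bounded by $T^{\O(1)}$. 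Fixing all tape coordinates except the one in which $F$ (resp.\ $G-n$) is a polynomial of degree $d=\O(1)$ with leading coefficient $\ell$, \Cref{small_values} applied to the monic polynomial $F/\ell$ gives that the set of values of that coordinate with $|F|\le\varepsilon$ has measure at most $2d\,(\varepsilon/|\ell|)^{1/d}\le T^{\O(1)}\varepsilon^{1/d}$; integrating over the remaining coordinates by Fubini and summing over the $\le T^{\O(1)}$ relevant integers $n$ yields $\Pr[N>m]\le T^{\O(1)}2^{-m/d}$. Hence $\mathbb E[N]=\sum_{m\ge0}\min\bigl(1,\,T^{\O(1)}2^{-m/d}\bigr)=\O(\log T)$, and the same tail bound makes the expected cost $\widetilde\O(N+\log T)$ of a single primitive equal to $\widetilde\O(\log T)$, the geometric decay of $\Pr[N>m]$ absorbing all polynomial and polylogarithmic factors. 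Adding the $\widetilde\O(\log T)$ cost of the exact steps and noting that deciding whether the integral cubic $f(X,1)$ has a rational root --- hence, together with whether its leading coefficient $a$ vanishes, whether $f$ is reducible over $\Q$ --- is likewise possible in $\widetilde\O(\log T)$ (for instance by approximating the real roots of $f(X,1)$, which are well separated since $\disc(f)\ne0$, to precision $T^{-\O(1)}$ and recognizing any rational root of denominator at most $|a|=\O(T^{1/4})$ from its continued-fraction convergents), we obtain the claimed running time.

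I expect the main obstacle to be exactly this probabilistic precision estimate, and within it two points. First, one must not apply \Cref{small_values} after conditioning on the outcomes of earlier steps, because the bits of $t'$ and $w$ already consumed have destroyed their uniformity; the fix is the unconditional bound on $\Pr[N>m]$ above, available precisely because $N=0$ once an earlier line fails. Second, one needs the uniform $T^{\O(1)}$ control of the leading coefficients $\ell$ and of the ranges of the floored quantities --- this is exactly what Bhargava's ``cutting off the cusp'' inequalities provide, and it has to be verified for each of the finitely many comparisons. The remaining ingredients --- fast multiplication so that precision-$2^{-m}$ arithmetic costs $\widetilde\O(m)$, the routine bookkeeping of the $\O(1)$ polynomial expressions, and the standard cost of sampling a uniform integer in $\{0,\dots,l_i'-1\}$ --- are straightforward.
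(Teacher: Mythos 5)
Your proposal is correct and follows essentially the same route as the paper: doubling the working precision until it suffices, reducing every real comparison and floor computation to sign tests of bounded-degree polynomials in the uniform random variables, invoking \Cref{small_values} together with the ``cutting off the cusp'' bounds to get the unconditional tail estimate $T^{\O(1)}2^{-m/d}$, summing the resulting geometrically decaying series, and testing irreducibility by approximating the real roots of $f(X,1)$. The only differences are cosmetic (per-primitive rather than per-iteration precision doubling, and squaring to clear $|\disc(f)|^{1/2}$ rather than multiplying by $s^6$).
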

\begin{proof}
We use the well-known method of doubling the precision until it suffices. See \Cref{random_cubic_alg_bit} below for the concrete implementation.

We leave it to the reader to verify that \Cref{random_cubic_alg_bit} is functionally equivalent to \Cref{random_cubic_alg}. (Line \ref{checks} of \Cref{random_cubic_alg_bit} corresponds to line \ref{checks1} of \Cref{random_cubic_alg}. Lines \ref{compa} and \ref{compb}--\ref{compd} correspond to line \ref{pickline}.)

We show that the probability that the algorithm doesn't finish within the first $i$-th iterations (due to insufficient precision) is $\O(T^v\varepsilon^u)=\O(T^v2^{-u2^i})$ for constants $u,v>0$ and that the running time of the $i$-th iteration is $\widetilde\O(\log T+p)=\widetilde\O(\log T+2^i)$. The expected total running time is then
\begin{align*}
&{}\ll \sum_{i\geq1}\min(1,T^v2^{-u2^i})\cdot\widetilde\O(\log T+2^i) \\
&{}\ll \sum_{1\leq i\leq \lfloor\log_2(\frac vu\log_2T)\rfloor}\widetilde\O(\log T+2^i)
+ \sum_{i>\lfloor\log_2(\frac vu\log_2T)\rfloor} T^v2^{-u2^i}\cdot\widetilde\O(\log T+2^i) \\
&{}\ll \widetilde\O(\log T) + \sum_{j\geq\lfloor\frac vu\log_2T\rfloor}T^v2^{-uj}\cdot\widetilde\O(\log T+j) \\
&{}\ll \widetilde\O(\log T) + \sum_{j\geq0}2^{-uj}\cdot\widetilde\O(\log T+j) \\
&{}\ll \widetilde\O(\log T)
\end{align*}
as claimed.

All real numbers computed in the algorithm are $\ll T^{\O(1)}$. It follows that the real numbers in the $i$-th iteration can be computed to absolute precision $\O(T^{\O(1)}\varepsilon)$ in time $\widetilde\O(\log T+p)$.

Now, we explain the failure probabilities and running times of the individual steps of the algorithm:

\begin{description}
\item[Line \ref{checks}]
The probability that the inequalities in line \ref{checks} cannot be decided from the given approximation is $\O(T^{\O(1)}\varepsilon)$ since both sides of the inequalities are known to an absolute precision of $\O(T^{\O(1)}\varepsilon)$ and the probability that the values are within this distance is $\O(T^{\O(1)}\varepsilon)$ as $\sigma$ is uniformly distributed on $(0,1)$.
\item[Line \ref{checkpi}]
This works similarly.
\item[Line \ref{compl}]
Being able to compute $l_i'(\lambda,s)=\lfloor1+l_i(\lambda,s)\rfloor$ from the approximation of $1+l_i(\lambda,s)$ is equivalent to $l_i(\lambda,s)$ not lying within a distance of $\O(T^{\O(1)}\varepsilon)$ from any integer $k>0$. The closest integer to $l_i(\lambda,s)$ is $\ll T$, so we only need to consider $\ll T$ such integers $k$. We first bound the probability that $|l_4(\lambda,s)-k|=|\lambda s^3-k|\ll T^{\O(1)}\varepsilon$. As $\lambda\asymp T^{\O(1)}$, this is equivalent to $|s^3-\lambda^{-1}k|\ll T^{\O(1)}\varepsilon$. By \Cref{small_values}, the set of such values $s\in\R$ has Lebesgue measure $\O(T^{\O(1)}\varepsilon^{1/3})$. As $s$ is bounded from below (by $\sqrt{\sqrt3/2}$), the probability measure (proportional to $s^{-2}\dd^\times s$) of this set of values $s$ is also $\O(T^{\O(1)}\varepsilon^{1/3})$. Summing over the $\ll T$ values $k$, we see that the probability that the approximation is insufficient for computing $l_4(\lambda,s)$ is $\O(T^{\O(1)}\varepsilon^{1/3})$. A similar argument works for $l_3(\lambda,s)$. For $l_1(\lambda,s)$, we instead use that the inequality $|l_1(\lambda,s)-k|=|\lambda s^{-3}-k|\ll T^{\O(1)}\varepsilon$ is equivalent to $|s^3-\lambda k^{-1}|\ll T^{\O(1)}\varepsilon$ since $1\leq k\ll T$ and $1\ll s\ll T^{\O(1)}$. We then proceed as before. The same argument works for $l_2(\lambda,s)$.
\item[Line \ref{compdelta}]
Since $l_i'(\lambda,s)\ll T^{\O(1)}$ and $\Delta_i$ is chosen uniformly at random from $(0,1)$, the probability that $\Delta_i\cdot l_i'(\lambda,s)$ lies within a distance $\O(T^{\O(1)}\varepsilon)$ from an integer is at most $\O(T^{\O(1)}\varepsilon)$.
\item[Line \ref{compa}]
The number $\lceil-\frac12l_1'(\lambda,s)\rceil$ can be computed with integer arithmetic since $l_1'(\lambda,s)\in\Z$.
\item[Lines \ref{compb}--\ref{compd}]
We compute $\lceil x\rceil$ for various numbers $x$. Note that in each line, $x$ is a polynomial in $t$ of degree at most $3$ whose leading coefficient has absolute value $\geq1$. By the same argument as for line \ref{compl}, the probability that any of these values cannot be computed is $\O(T^{\O(1)}\varepsilon^{1/3})$.
\item[Line \ref{checkdisc}]
This involves only integer arithmetic.
\item[Line \ref{checksig}]
It suffices to check that $\disc(f)>0$ if $r=3$ and $\disc(f)<0$ if $r=1$.
\item[Line \ref{compq}]
If
\[q(a(s)^{-1}n(-t)f) - R^2|\disc(f)|^{1/2} \ll T^{\O(1)}\varepsilon,
\]
then
\[s^6\cdot (q(a(s)^{-1}n(-t)f) - R^2|\disc(f)|^{1/2})\ll T^{\O(1)}\varepsilon.
\]
The left-hand side is a polynomial in $s$ of degree $12$ with leading coefficient $5a^2\geq5$. By \Cref{small_values}, the Lebesgue measure of the set of values $s$ satisfying the inequality is therefore $\O(T^{\O(1)}\varepsilon^{1/12})$. As $1\ll s\ll T^{\O(1)}$, the probability measure of the corresponding set of values $s$ is also $\O(T^{\O(1)}\varepsilon^{1/12})$.
\item[Line \ref{checkirred}]
Note that a cubic form $f\in\V(\Z)$ is reducible over $\Q$ if and only if it has a rational root. For $a\neq0$, this is equivalent to the polynomial $f(X,1)\in\Z[X]$ of degree $3$ having a rational root. The denominator of such a root must divide $a\ll T$. Hence, it suffices to check whether $f(X,1)$ has a root in $\frac1a\Z$. The real roots of $f(X,1)$ can be approximated with accuracy $\frac1a$ for example using a binary search and Sturm sequences in time $\widetilde\O(\log T)$.
\qedhere
\end{description}
\end{proof}
\begin{algorithm}[H]
\caption{Finding a random orbit of cubic forms (using bit operations)}
\label{random_cubic_alg_bit}
\begin{algorithmic}[1]
\For{$i\gets1,2,\dots$}
\State Let $p=2^i$ and $\varepsilon=2^{-p}$. In this iteration, we will compute all occurring real numbers to absolute precision $\O(T^{\O(1)}\varepsilon)$. If the computed precision is insufficient to decide an occurring inequality or to compute an occurring floor or ceiling value, we immediately go to the next iteration, starting over with the next value of $i$.
\State Pick uniformly random elements $\tau,\sigma,\pi,\Delta_1,\dots,\Delta_4$ of $(0,1)$ to absolute precision $\varepsilon$ by picking the first $p$ binary digits of each of the numbers, keeping any digits that were already picked in the previous iteration.
\State Compute $t=\tau-\frac12$.
\State \Return FAIL \textbf{unless} $\frac{s_{\min}^2}{s_{\max}^2} < \sigma < \frac{s_{\min}^2}{\sqrt{1-t^2}}$.\label{checks}
\State Compute $s=\frac{s_{\min}}{\sqrt{\sigma}}$.
\State Compute $l_i'(\lambda,s)=\lfloor1+l_i(\lambda,s)\rfloor$ for $i=1,\dots,4$.\label{compl}
\State \Return FAIL \textbf{if} $\pi>\frac{l_1'(\lambda,s)\cdots l_4'(\lambda,s)}{L_1'(\lambda)\cdots L_4'(\lambda)}$.\label{checkpi}
\State Compute $\delta_i=\lfloor\Delta_i\cdot l_i'(\lambda,s)\rfloor$ for $i=1,\dots,4$.\label{compdelta}
\State Compute $a=\lceil-\frac12l_1'(\lambda,s)\rceil+\delta_1$.\label{compa}
\State \Return FAIL \textbf{unless} $a\neq0$.
\State Compute $b=\lceil-\frac12l_2'(\lambda,s)+3ta\rceil+\delta_2$.\label{compb}
\State Compute $c=\lceil-\frac12l_3'(\lambda,s)-3t^2a+2tb\rceil+\delta_3$.
\State Compute $d=\lceil-\frac12l_4'(\lambda,s)+t^3a-t^2b+tc\rceil+\delta_4$.\label{compd}
\State Let $f=aX^3+bX^2Y+cXY^2+dY^3$.
\State \Return FAIL \textbf{unless} $0<|\disc(f)|\leq T$.\label{checkdisc}
\State \Return FAIL \textbf{unless} $f\in\V^r(\R)$.\label{checksig}
\State \Return FAIL \textbf{unless} $q(a(s)^{-1}n(-t)f) < R^2|\disc(f)|^{1/2}$.\label{compq}
\State \Return FAIL \textbf{unless} $f$ is irreducible over $\Q$.\label{checkirred}
\State \Return $f$.
\EndFor
\end{algorithmic}
\end{algorithm}

\begin{corollary}\label{cubicsummary}
There are algorithms which for a given number $r\in\{1,3\}$ and $T$ with
\[
T \geq
\begin{cases}
49,& r=3,\\
23,& r=1
\end{cases}
\]
compute in expected time $\widetilde\O(\log T)$ a cubic integral domain $S$ of signature $r$ with $|\disc(S)|\leq T$ and such that
\begin{enumalpha}
\item the probability of returning any given such ring $S$ is for fixed $T$ proportional to $1/\#\Aut(S)$ or
\item all such rings occur with the same probability.
\end{enumalpha}
(Here, cubic rings are represented by a corresponding binary cubic form.)
\end{corollary}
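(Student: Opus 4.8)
The plan is to combine \Cref{cubic-thm}, \Cref{cubicimpl}, Levi's parametrization, and the rejection trick of \Cref{uniform}. For part a), run \Cref{random_cubic_alg_bit} (equivalently \Cref{random_cubic_alg}) repeatedly until it succeeds and output the binary cubic form $f$ it returns. By part a) of \Cref{cubic-thm} and Levi's property b), this $f$ lies in an irreducible $\GL_2(\Z)$-orbit corresponding to a cubic integral domain $S$ of signature $r$ with $|\disc(S)|=|\disc(f)|\leq T$, so outputting $f$ amounts to returning $S$. By part b) of \Cref{cubic-thm} and Levi's properties a)--b), the unconditional probability that the orbit $\GL_2(\Z)f$ corresponding to a fixed such ring $S$ is returned is proportional to $1/\#\Stab_{\GL_2(\Z)}(f)=1/\#\Aut(S)$; conditioning on success---which is what repeating the algorithm does---rescales all of these probabilities by the same factor, establishing a).

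For the running time, \Cref{cubicimpl} shows that a single run of \Cref{random_cubic_alg_bit} has expected running time $\widetilde\O(\log T)$. The hypothesis on $T$ together with \Cref{smallest} guarantees that at least one irreducible $\GL_2(\Z)$-orbit in $\V^r(\Z)$ with $0<|\disc|\leq T$ exists, so by part c) of \Cref{cubic-thm} (cf.\ \Cref{repeat}) the expected number of runs until success is at most $1/p_0$ for a constant $p_0>0$. Since the runs are independent, Wald's identity bounds the total expected running time by $\tfrac{1}{p_0}\cdot\widetilde\O(\log T)=\widetilde\O(\log T)$; in particular, the returned form has only $\widetilde\O(\log T)$ bits.

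For part b), we additionally apply the rejection trick of \Cref{uniform}: once the orbit $\GL_2(\Z)f$ corresponding to $S$ has been found, we return it with probability $\#\Stab_{\GL_2(\Z)}(f)/3=\#\Aut(S)/3$ and restart otherwise. Then each such ring $S$ is returned with probability proportional to $\tfrac{1}{\#\Aut(S)}\cdot\tfrac{\#\Aut(S)}{3}=\tfrac13$, i.e., uniformly; and since the acceptance probability is at least $\tfrac13$, the success probability of an entire attempt is still bounded below by $p_0/3$, so the overall running time stays $\widetilde\O(\log T)$ provided we can compute $\#\Aut(S)=\#\Stab_{\GL_2(\Z)}(f)\in\{1,3\}$ in time $\widetilde\O(\log T)$. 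This is the main obstacle.

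To compute $\#\Aut(S)$, note first that $\disc(S)$ and the discriminant of the fraction field $K$ of $S$ differ by a perfect square factor, so $\#\Aut(S)=3$ forces $\Aut(K)=\Z/3\Z$ and hence forces $\disc(f)=\disc(S)$ to be a perfect square; this necessary condition is checkable in time $\widetilde\O(\log T)$ by computing an integer square root (in particular $\#\Aut(S)=1$ whenever $r=1$, since cyclic cubic fields are totally real). When $\disc(f)$ is a perfect square, $K$ is cyclic, and by Levi's property a), $\#\Aut(S)=3$ precisely when $f$ is fixed by an order-$3$ element of $\GL_2(\Z)$---equivalently, when the cyclic permutation of the three roots of $f$ in $\vP^1(\R)$ induced by the nontrivial automorphism of $K$ is realized by a matrix in $\GL_2(\Z)$. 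One decides this by approximating the roots of $f$ to absolute precision $T^{-\O(1)}$ (with the binary-search-and-Sturm-sequence method used in the proof of \Cref{cubicimpl}), writing down the matrix with rational entries of an order-$3$ Möbius transformation cyclically permuting them, clearing denominators to obtain a primitive integral matrix $M$, and checking whether $|\det M|=1$ and whether the corresponding element $h\in\GL_2(\Z)$ (one of $M^{T}$, $(M^{T})^{-1}$, or their squares) satisfies $hf=f$ exactly. I expect the delicate part to be making this last step rigorous---above all, bounding the entries of $M$ by $T^{\O(1)}$ (so that the verification runs in time $\widetilde\O(\log T)$) and confirming that the procedure returns $3$ in exactly the case $\#\Aut(S)=3$.
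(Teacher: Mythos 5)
Your argument for part a) and for the running time is exactly the paper's (very terse) proof, just written out: repeat \Cref{random_cubic_alg_bit} until success, invoke \Cref{cubic-thm}, \Cref{cubicimpl}, \Cref{smallest} and \Cref{repeat}, and translate orbits into rings via Levi's correspondence. For part b) you also follow the paper in invoking the rejection step of \Cref{uniform}. The one place you go beyond the paper is in worrying about how to \emph{compute} $\#\Stab_{\GL_2(\Z)}(f)$ in time $\widetilde\O(\log T)$; the paper's proof of the corollary (and \Cref{uniform} itself) is silent on this, so you are right that it is a step that needs an algorithm, not just an existence statement. Your proposed method --- test whether $\disc(f)$ is a square, note that $r=1$ forces trivial stabilizer, then numerically recover an order-$3$ M\"obius transformation permuting the roots and verify it integrally --- is plausible, but as you concede it is not pinned down (entry bounds on $M$, correctness of the rounding). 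A cleaner standard route: the stabilizer of $f$ is contained in the $\GL_2(\Z)$-automorphism group of its Hessian covariant $H_f$, which for $r=3$ is a definite integral binary quadratic form with coefficients $T^{\O(1)}$; Gauss reduction computes its (finite, order $\leq 12$) automorphism group in time $\widetilde\O(\log T)$, and one then checks which of those finitely many matrices fix $f$ by exact integer arithmetic. With that substitution (or with your method completed), the proof is correct and is the same proof as the paper's, with a genuine small gap of the paper filled in rather than introduced.
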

\begin{proof}
Claim a) follows immediately from \Cref{cubic-thm,cubicimpl} and \Cref{repeat,smallest}. For b), we also use \Cref{uniform}.
\end{proof}

\begin{remark}
To compute a random cubic number field $K$ with signature $r$ and $|\disc(K)|\leq T$ (either uniformly or with probability proportional to $1/\#\Aut(K)$), one can compute cubic integral domains $S$ until finding one which is the ring of integers of its field of fractions $K$. There are well-known algorithms for testing whether $S$ is the ring of integers of its field of fractions, whose running time is dominated by the factorization of the integer $\disc(S)$. (See for example \cite[section 6.1]{cohen-computational-number-theory-1}.)

In \cite{bach-random-factored-numbers}, Bach gave an algorithm that generates a uniformly random integer $1\leq x\leq N$ together with its factorization. The expected running time of his algorithm is that required for $\O(\log N)$ primality tests of numbers $1\leq p\leq N$. It could be an interesting problem to efficiently find a uniform random cubic integral domain together with the factorization of its discriminant.
\end{remark}

\section{Implementation}\label{implementation}

An implementation of the algorithm described above is available at \url{https://github.com/fagu/random-orbits}. It makes use of the FLINT library \cite{flint} for large integer and polynomial arithmetic, and in particular for arbitrary-precision interval arithmetic \cite{Johansson2017arb}.

\Cref{running_time_plot} gives the approximate average time it takes to generate one random cubic integral domain $S$ with signature $r$ and $|\disc(S)|\leq 2^t$, where the probability of obtaining a given ring $S$ is proportional to $1/\#\Aut(S)$. For example, it takes roughly $10^{-3}$~seconds to generate a random ring with $r=3$ and $|\disc(S)|\leq2^{200}$ and roughly one second to generate a random ring with $r=3$ and $|\disc(S)|\leq 2^{200\,000}$.

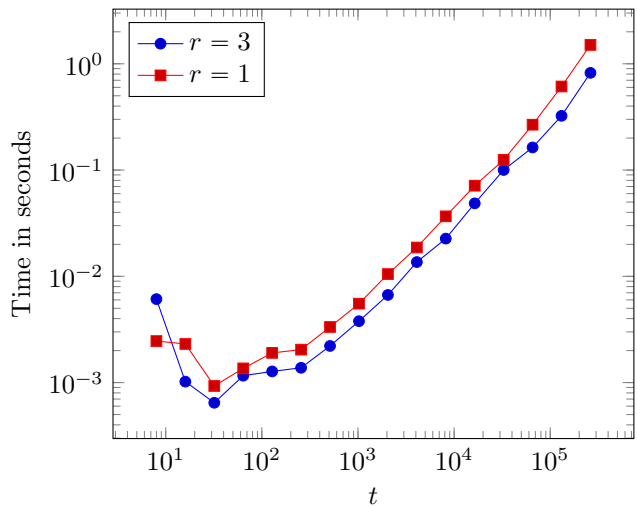
\begin{figure}[h]
\begin{tikzpicture}
\begin{loglogaxis}[xlabel={$t$}, ylabel={Time in seconds}, legend entries={$r=3$, $r=1$}, legend pos=north west]
\addplot table[x index=0, y index=1] {timing.dat};
\addplot table[x index=0, y index=2] {timing.dat};
\end{loglogaxis}
\end{tikzpicture}
\caption{Average time to generate one random ring}\label{running_time_plot}
\end{figure}

\section{Other parameterizations}\label{outlook}

The method described above can be adapted to other parameterizations by prehomogeneous vector spaces.

For example, it is well-known that there is a bijection between the set of $\GL_2(\Z)$-orbits of primitive binary quadratic forms and the set of ideal classes of orders in quadratic number fields. One obtains an algorithm which constructs a random pair $(S,I)$, where $S$ is a quadratic integral domain with given signature and $|\disc(S)|\leq T$ and $I$ is an invertible ideal class of $S$ with expected running time $\widetilde\O(\log T)$. The probability the algorithm returns a given such pair $(S,I)$ is proportional to $\frac{\Reg(S)}{\omega_S}$, where $\Reg(S)$ is the regulator of $S$ and $\omega_S$ is the number of roots of unity in $S$.

The method can also be used to construct random quartic or quintic rings as in \cite{bhargava-quartic-parametrization}, \cite{bhargava-quintic-parametrization} (with bounds derived as in \cite{bhargava-quartic-counting} and \cite{bhargava-quintic-counting}). Unfortunately, the constant factor in the running time crucially depends on the dimension of the prehomogeneous vector space. Quintic rings are parameterized by orbits in a $40$-dimensional prehomogeneous vector space and the constant factor becomes impractically large.

\printbibliography

\end{document}